\documentclass[review,12pt]{article}

\usepackage{hyperref,float,ragged2e,authblk}
\usepackage{amsmath,amssymb,amsthm,graphicx}











\bibliographystyle{elsarticle-num}

\newcommand{\noi}{\noindent}

\newtheorem{theorem}{Theorem}[section]

\newtheorem{observation}{Observation}[section]
\newtheorem{corollary}{Corollary}[section]

\begin{document}


\title{The Path Partition Conjecture is True and its Validity Yields
  Upper Bounds for Detour Chromatic Number and Star Chromatic Number} 


\author{G. Sethuraman\\
Department of Mathematics, Anna University\\
Chennai 600 025, INDIA\\
sethu@annauniv.edu}

\maketitle

\begin{abstract}
The detour order of a graph $G$, denoted $\tau(G)$, is the order of a
longest path in $G$. A partition $(A, B)$ of $V(G)$ such that
$\tau(\langle A \rangle) \leq a$ and $\tau(\langle B \rangle) \leq b$
is called an $(a, b)$-partition of $G$. A graph $G$ is called
$\tau$-partitionable if $G$ has an $(a, b)$-partition for every pair
$(a, b)$ of positive integers such that $a + b = \tau(G)$.  The
well-known Path Partition Conjecture states that every graph is
$\tau$-partitionable. In \cite{df07} Dunber and Frick have shown that
if every 2-connected graph is $\tau$-partitionable then every graph is
$\tau$-partitionable.  In this paper we show that every 2-connected
graph is $\tau$-partitionable. Thus, our result settles
the Path Partition Conjecture affirmatively. We prove the following two theorems as the implications of the validity of the Path Partition Conjecture.\\
{\bf Theorem 1:} For every graph $G$, $\chi_s(G) \leq \tau(G)$, where $\chi_s(G)$ is the star chromatic number of a graph $G$.
\newpage
The $n^{th}$ detour chromatic number of a graph $G$, denoted $\chi_n(G)$, is the minimum number of colours required for colouring the vertices of $G$ such that no path of order greater than $n$ is mono coloured. These chromatic numbers were introduced by Chartrand, Gellar and Hedetniemi\cite{cg68} as a generalization of vertex chromatic number $\chi(G)$.\\ 
{\bf Theorem 2:} For every graph $G$ and for every $n \geq 1$, $\chi_n(G) \leq \left\lceil \frac{\tau_n(G)}{n} \right\rceil$, where $\chi_n(G)$ denote the $n^{th}$ detour chromatic number.\\
Theorem 2 settles the conjecture of Frick and Bullock \cite{fb01} that $\chi_n(G) \leq \left\lceil \frac{\tau(G)}{n} \right\rceil$, for every graph $G$, for every $n \geq 1$, affirmatively.

\end{abstract}

{\bf Keywords:}Path Partition;Path Partition Conjecture;Star
Chromatic Number;Detour Chromatic Number;Upper bound of chromatic number;Upper bound of Star Chromatic Number;Upper bound of Detour Chromatic Number.\\ 




\section{Introduction}

All graphs considered here are simple, finite and undirected. 
Terms not defined here can be referred from the book
\cite{we02}. A longest path in a graph $G$ is called a detour of
$G$. The number of vertices in a detour of $G$ is called the detour
order of $G$ and is denoted by $\tau(G)$. A partition $(A, B)$ of
$V(G)$ such that $\tau(\langle A \rangle) \leq a$ and $\tau(\langle B
\rangle) \leq b$ is called an $(a, b)$-partition of $G$. If $G$ has an
$(a, b)$-partition for every pair $(a, b)$ of positive integers such
that $a + b = \tau(G)$, then we say that $G$ is $\tau$-partitionable.
The following conjecture is popularly known as the Path Partition
Conjecture.

\noi\textbf{Path Partition Conjecture:} {\it Every graph is
  $\tau$-partitionable}.

The Path Partition Conjecture was discussed by Lovasz and Mihok in
1981 in Szeged and treated in the theses \cite{ha84} and
\cite{vr86}. The Path Partition Conjecture first appeared in the
literature in 1983, in a paper by Laborde et al. \cite{lp82}. In 1995
Bondy \cite{bo95} posed the directed version of the Path Partition
Conjecture. In 2004, Aldred and Thomassen \cite{at04} disproved two stronger versions of the Path Partition Conjecture, known as the Path Kernel Conjecture \cite{bh97,mi85} and the Maximum
$P_n$-free Set Conjecture \cite{df04}. Similar partitions were studied for other
graph parameters too. Lovasz proved in \cite{lo66} that every graph is
$\Delta$-partitionable, where $\Delta$ denotes the maximum degree (A
graph $G$ is $\Delta$-partitionable if, for every pair $(a, b)$ of
positive integers satisfying $a + b = \Delta(G) - 1$, there exists a
partition $(A, B)$ of $V(G)$ such that $\Delta(\langle A \rangle) \leq
a$ and $\Delta(\langle B \rangle) \leq b$). For the results pertaining
to the Path Partition Conjecture and related conjectures refer
\cite{bd98,bh97,df99,df07,df04,fb01,fr13,ha84,lp82,mi85,se11,vr86,niel}. An
$n$-detour colouring of a graph $G$ is a colouring of the vertices of $G$ such
that no path of order greater than $n$ is monocoloured. The $n^{th}$
detour chromatic number of graph $G$, denoted by $\chi_n$, is the minimum number of
colours required for an $n$-detour colouring of a graph $G$. It is interesting to note that for a graph $G$, when $n=1$, $\chi_1(G)=\chi(G)$. These chromatic
numbers were introduced by Chartrand, Gellor and Hedetnimi \cite{cg68}
in 1968 as a generalization of vertex chromatic number.

If the Path Partition Conjecture is true, then the following
conjecture of Frick and Bullock \cite{fb01} is also true.

\noi\textbf{Frick-Bullock Conjecture:} $\chi_n(G) \leq \left\lceil
\frac{\tau(G)} {n} \right\rceil$ {\it for every graph $G$ and for
  every $n \geq 1$.}\\
Recently, Dunbar and Frick \cite{df07} proved the following theorem.

\begin{theorem}[Dunber and Frick \cite{df07}] \label{thm1.1}
If every 2-connected graph is \break $\tau$-partitionable then every graph
is $\tau$-partitionable.
\end{theorem}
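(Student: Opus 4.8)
\noi\textbf{Proof plan.} I would argue by contradiction with a smallest counterexample, showing that it must be $2$-connected, which the hypothesis of the theorem rules out. So let $G$ be a non-$\tau$-partitionable graph with $|V(G)|$ minimum, and fix positive integers $a\le b$ with $a+b=\tau(G)$ for which $G$ has no $(a,b)$-partition. Throughout I use the elementary \emph{extension fact}: a $\tau$-partitionable graph $H$ has an $(a',b')$-partition whenever $a',b'\ge 1$ and $a'+b'\ge\tau(H)$ --- take $(V(H),\emptyset)$ if $\tau(H)\le a'$, symmetrically if $\tau(H)\le b'$, and otherwise apply $\tau$-partitionability of $H$ to $(\tau(H)-b',\,b')$, which is a pair of positive integers summing to $\tau(H)$. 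From this, $G$ is connected: were $C_1,\dots,C_k$ its components, each would be smaller than $G$, hence $\tau$-partitionable, and since $\tau(C_i)\le\tau(G)=a+b$ each would have an $(a,b)$-partition, and the union of these would be an $(a,b)$-partition of $G$. Hence it remains to show that $G$ has no cut vertex, for then $G$ is $2$-connected and the hypothesis applies.

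Suppose $v$ is a cut vertex and write $G=G_1\cup G_2$ with $V(G_1)\cap V(G_2)=\{v\}$, both $G_i$ connected and on fewer than $|V(G)|$ vertices, hence $\tau$-partitionable. Let $s_i=\tau_v(G_i)$ be the order of a longest path of $G_i$ ending at $v$; then $\tau(G)=\max\{\tau(G_1),\tau(G_2),\,s_1+s_2-1\}=a+b$. Partitions of $G$ are built from partitions of the pieces by the amalgamation identity: if $(A_i,B_i)$ partitions $V(G_i)$ with $v$ on the same side both times, then for $A=A_1\cup A_2$ with $v\in A$,
\[ \tau(\langle A\rangle)=\max\{\tau(\langle A_1\rangle),\ \tau(\langle A_2\rangle),\ \tau_v(\langle A_1\rangle)+\tau_v(\langle A_2\rangle)-1\},\qquad \tau(\langle B\rangle)=\max\{\tau(\langle B_1\rangle),\ \tau(\langle B_2\rangle)\}, \]
and symmetrically when $v\in B$. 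So it suffices to produce, for one orientation of $v$, partitions $(A_i,B_i)$ of $G_i$ with $v\in A_i$, $\tau(\langle B_i\rangle)\le b$, $\tau(\langle A_i\rangle)\le a$ and $\tau_v(\langle A_i\rangle)\le a_i$ where $a_1+a_2=a+1$ (resp.\ the mirror statement with $v\in B_i$ and $b_1+b_2=b+1$), since then the cross term above is at most $a$ (resp.\ $b$). If $\tau(G-v)\le b$ this is immediate, as $(\{v\},\,V(G)\setminus\{v\})$ already works; but in general $\tau(G-v)$ is too large for this.

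The crux is obtaining the bound $\tau_v(\langle A_i\rangle)\le a_i$ with $a_i$ small, since plain $\tau$-partitionability of $G_i$ controls the detour orders of its classes but not the order of a path ending at $v$ inside one. My tool would be pendant-path gadgets: attaching a path on $m$ new vertices at $v$ yields a graph $G_i^{(m)}$ with $\tau(G_i^{(m)})=\max\{\tau(G_i),\,s_i+m\}$, and for a suitable $m$ and a suitable pair summing to $\tau(G_i^{(m)})$, the pendant path in a $\tau$-partition of $G_i^{(m)}$ so constrains the class of $v$ that, restricted to $G_i$, one obtains a partition with $\tau_v$ of $v$'s class bounded as needed; when $m<|V(G_{3-i})|-1$ the graph $G_i^{(m)}$ is still smaller than $G$, hence $\tau$-partitionable by minimality. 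Combining the two pieces through the amalgamation identity with the budget split $a_1+a_2=a+1$, and switching to the orientation $v\in B$ when the first fails, then produces the required $(a,b)$-partition of $G$, a contradiction; so $G$ is $2$-connected and the theorem follows. I expect the real obstacle to be the situation where no pendant-path gadget fits --- for instance when $v$ is the unique cut vertex of $G$ and joins two small blocks. An end-block $G_1=K_2$ (pendant edge $vw$) can be handled by hand: place $w$ opposite to $v$ and apply the extension fact to $G_2$ with $(a,b-1)$ or $(a,b)$ according as a detour of $G$ ends at $w$, then check the amalgamation identity directly. When $G_1$ is $2$-connected, the decisive point is that $2$-connected graphs are $\tau$-partitionable \emph{by the hypothesis, for free and irrespective of size}: one would attach to $G_1$ a $2$-connectivity-preserving gadget and harvest its $\tau$-partitionability (again from the hypothesis) to control $\tau_v$ of $v$'s class, and then combine with $G_2$ as above. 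Making this last step work uniformly is, I believe, the heart of the reduction.
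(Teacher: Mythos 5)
The paper does not actually prove Theorem~\ref{thm1.1}: it is quoted from Dunbar and Frick \cite{df07}, so the only proof to compare against is their published one, which runs an induction over the block structure with a direct analysis of longest paths through a cut vertex rather than via attached gadgets. Your skeleton --- minimal counterexample, reduction to connected graphs via the extension fact, the amalgamation identity at a cut vertex, and the reduction to controlling $\tau_v$ of the class containing $v$ on each side with budgets $a_1+a_2=a+1$ --- is sound and is essentially the right frame. But the one step that actually carries the theorem is not done, and the device you propose for it does not work as described.

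Concretely, the pendant-path gadget does not deliver the bound you need. If you attach a path $u_1u_2\dots u_m$ at $v$ and take any $(p,q)$-partition of $G_i^{(m)}$ supplied by $\tau$-partitionability, that partition is under no obligation to place $u_1,\dots,u_m$ in the class of $v$; if the pendant path is split between the two classes, or lies entirely opposite $v$, you learn nothing about the order of a longest path ending at $v$ inside $v$'s class restricted to $G_i$. The inference $\tau_v(\langle A_i\cap V(G_i)\rangle)\le p-m$ requires all $m$ new vertices to land on $v$'s side, and nothing forces that. In addition, the constraint $m<|V(G_{3-i})|-1$, which you need in order to invoke minimality of the counterexample, can leave no admissible $m$ precisely when one side of the cut is small relative to the $\tau_v$ bound required --- you notice this for a pendant edge, but it bites more generally. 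Finally, the closing paragraph (``attach a $2$-connectivity-preserving gadget and harvest its $\tau$-partitionability'') is a statement of intent rather than an argument; by your own admission it is ``the heart of the reduction,'' and it is exactly the part that is missing. As it stands the proposal is a plausible plan, not a proof.
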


In this paper we show that the Path Partition Conjecture is true for
every 2-connected graph. Thus, Theorem \ref{thm1.1} and our result
imply that the Path Partition Conjecture is true.  The validity of the
Path Partition Conjecture would imply the following Path Partition
Theorem.

\noi{\bf Path Partition Theorem.} {\it For every graph $G$ and for
  every $t$-tuple \break $(a_1, a_2, \dots, a_t)$ of positive
  integers with $a_1 + a_2 + \cdots + a_t = \tau(G)$ and $t \geq 1$,
  there exists a partition $(V_1, V_2, \dots, V_t)$ of $V(G)$
  such that $\tau(G(\langle V_i \rangle) \leq a_i$, for every $i$, $1 \leq i
  \leq t$.}\\
The Path Partition Theorem immediately implies that the Conjecture of
Frick and Bullock is true. The validity of Frick and Bullock Conjecture naturally implies the classical upper bound for the chromatic number of a graph $G$ that $\chi(G)=\chi_1(G) \leq \tau(G)$ proved by Gallai\cite{gall}.

A star colouring of a graph $G$ is a proper vertex colouring in which
every path on four vertices uses at least three distinct colours. The
star chromatic number of $G$ denoted by $\chi_s(G)$ is the least
number of colours needed to star color $G$. As a consequence of the
Path Partition Theorem, we have obtained an upper bound for the star
chromatic number.  More precisely, we show that $\chi_s(G) \leq
\tau(G)$ for every graph $G$.

\section{Main Result}

In this section we prove our main result that every 2-connected graph
is $\tau$-partitionable.

We use Whitney's Theorem on the characterization of 2-connected graph
in the proof of our main result given in Theorem \ref{thm2.2}.

An ear of a graph $G$ is a maximal path whose internal vertices have
degree 2 in $G$. An ear decomposition of $G$ is a decomposition $P_0,
P_1, \dots, P_k$ such that $P_0$ is a cycle and $P_i$ for $i \geq 1$
is an ear of $P_0 \cup P_1 \cup \dots \cup P_i$.

\begin{theorem}[Whitney \cite{wh}] A graph is 2-connected if and only
if it has an ear decomposition. Furthermore, every cycle in a
2-connected graph is the initial cycle in some ear decomposition.
\end{theorem}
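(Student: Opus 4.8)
The plan is to prove the two directions of the equivalence together with the ``furthermore'' clause, the last of which will fall out of the construction used for the forward direction. Throughout write $G_i = P_0 \cup P_1 \cup \cdots \cup P_i$.

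For the easy direction (an ear decomposition forces 2-connectivity) I would induct on $i$ and show each $G_i$ is 2-connected. The base case is that $G_0 = P_0$ is a cycle, hence 2-connected. For the step, $G_i$ is obtained from the 2-connected graph $G_{i-1}$ by attaching a path $P_i$ whose two distinct endpoints lie in $G_{i-1}$ and whose internal vertices are new. Delete any single vertex $v$ of $G_i$: if $v$ is internal to $P_i$ then $G_i - v$ is $G_{i-1}$ with a (possibly shortened) pendant path attached, hence connected; if $v \in V(G_{i-1})$ then $G_{i-1} - v$ is connected and the surviving portion of $P_i$ still hangs off it through at least one surviving endpoint, so $G_i - v$ is again connected. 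Since $|V(G_i)| \ge 3$, this gives 2-connectivity, and at $i = k$ we get that $G$ is 2-connected.

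For the forward direction I would build the decomposition greedily from an arbitrary cycle $C$ of $G$ (one exists because $G$ has minimum degree at least $2$); set $P_0 = C$. Suppose $G_i \subsetneq G$ has been constructed. If there is a vertex $y \notin V(G_i)$ adjacent to some $x \in V(G_i)$, then since $G$ is 2-connected $G - x$ is connected, so there is a path from $y$ to $V(G_i)$ inside $G - x$; truncate it at its first vertex $z$ of $G_i$ and prepend the edge $xy$ to obtain a path with distinct endpoints $x \ne z$ in $G_i$ and all internal vertices outside $G_i$, and take this to be $P_{i+1}$. Otherwise $V(G_i) = V(G)$ but some edge $xy \in E(G) \setminus E(G_i)$ remains; take $P_{i+1} = xy$, a single-edge ear. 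Each step adds at least one vertex or edge, so the process terminates with $G_k = G$, and $(P_0, \dots, P_k)$ is an ear decomposition with initial cycle $P_0 = C$. Since $C$ was arbitrary, the ``furthermore'' clause follows at once.

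The part I would regard as the real content — and the main thing to be careful about — is the forward step: checking that the extracted subgraph is genuinely an ear, i.e. that its endpoints are \emph{distinct} vertices of $G_i$ and its interior is disjoint from $G_i$. Distinctness is precisely where 2-connectivity is invoked, via routing back from $y$ to $G_i$ while avoiding the attachment vertex $x$; disjointness is handled by stopping the path the first time it re-enters $G_i$. A minor bookkeeping point is whether ``ear'' is intended in the strict sense of a \emph{maximal} path whose internal vertices have degree $2$; if so, one either chooses $P_{i+1}$ maximal to begin with, or, at the end, covers the remaining maximal subdivided edges of $G$ one at a time — neither variant affects any of the arguments above.
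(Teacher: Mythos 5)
The paper does not prove this statement at all: it is quoted as Whitney's classical theorem, with a citation, and is used as a black box in the proof of Theorem 2.2. So there is no proof in the paper to compare yours against. Your argument is the standard textbook proof (essentially the one in West, which the paper cites for terminology) and it is correct: the inductive deletion argument for the easy direction is sound, and in the forward direction the key point --- that 2-connectivity lets you route the new path back into $G_i$ while avoiding the attachment vertex $x$, guaranteeing two \emph{distinct} endpoints --- is exactly where the hypothesis is used. Your closing worry about maximality is also easily discharged: the endpoints of each extracted ear lie in the 2-connected graph $G_i$ and hence have degree at least $3$ in $G_{i+1}$, so the path cannot be extended and is automatically a maximal path whose internal vertices have degree $2$.
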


\begin{theorem}\label{thm2.2}
Every 2-connected graph is $\tau$-partitionable.
\end{theorem}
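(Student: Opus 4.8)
The plan is to exploit the ear decomposition guaranteed by Whitney's theorem and argue by induction on the number of ears. Fix a pair $(a,b)$ of positive integers with $a+b=\tau(G)$, and assume without loss of generality that $a\le b$. Using the ``furthermore'' clause of Whitney's theorem, choose an ear decomposition $P_0,P_1,\dots,P_k$ of $G$ in which $P_0$ is a \emph{longest} cycle of $G$, and set $G_i=P_0\cup P_1\cup\dots\cup P_i$, so that $G_0\subset G_1\subset\dots\subset G_k=G$, each $G_i$ is $2$-connected, and $\tau(G_i)\le\tau(G)$ for all $i$. I would prove by induction on $i$ that each $G_i$ admits an $(a,b)$-partition.

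For the base case, $G_0$ is a cycle $C_m$ with $m\le\tau(G)=a+b$. If $m\le a$, put all vertices of $C_m$ into $A$ and leave $B$ empty; otherwise cut the cycle into an arc on $a$ vertices and the complementary arc on $m-a\le b$ vertices and let these be $A$ and $B$. In either case $\langle A\rangle$ and $\langle B\rangle$ are paths (or, in the first case, a single cycle) of orders at most $a$ and $b$ respectively, so $(A,B)$ is an $(a,b)$-partition of $G_0$.

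For the inductive step, suppose $(A,B)$ is an $(a,b)$-partition of $G_{i-1}$ and let $P_i$ be the next ear, with endpoints $x,y\in V(G_{i-1})$ and internal vertices $z_1,z_2,\dots,z_m$ (each of degree $2$ in $G_i$), where $z_1$ is adjacent to $x$ and $z_m$ to $y$, the case $m=0$ being the addition of the single edge $xy$. The crucial structural observation is that, since $z_1,\dots,z_m$ induce a path attached to the rest of $G_i$ only at $x$ and at $y$, every path of $G_i$ meets $\{z_1,\dots,z_m\}$ in a single contiguous block $z_j z_{j+1}\cdots z_\ell$; hence a monochromatic path of $G_i$ that is not already a monochromatic path of $G_{i-1}$ has the form $Q_x\cdot z_j\cdots z_\ell\cdot Q_y$, where $Q_x$ and $Q_y$ are (possibly empty) monochromatic paths of $G_{i-1}$ terminating at $x$ and at $y$, and if the block is traversed from the $x$-side to the $y$-side then it is necessarily the entire ear interior $z_1\cdots z_m$. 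I would therefore colour $z_1,\dots,z_m$ so as to destroy every such dangerous path: when $m\ge 2$, colouring the $z_j$ with alternating colours already forbids any monochromatic path from both entering and leaving the ear interior, and a short case analysis on the colours of $x,y$ and on the length and parity of the ear then bounds the orders of the new monochromatic components, with $a\le b$ used to absorb the longer residual pieces into $B$.

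The main obstacle is precisely this last step. Even the addition of a single edge $xy$, or colouring $z_1$ with the colour of $x$, can splice two previously harmless monochromatic paths of $G_{i-1}$ that terminate at $x$ and at $y$ into a monochromatic path of order exceeding $a$ or $b$, so a bare ``extend the partition'' induction does not suffice. To push the argument through I would strengthen the inductive hypothesis to carry, alongside the $(a,b)$-partition, quantitative control on the lengths of monochromatic paths that can terminate at each vertex of $G_{i-1}$ — in particular at the vertices that may later serve as ear-endpoints — so that the ear can be coloured consistently with these bounds; alternatively one performs a bounded local recolouring near $x$ and $y$ and shows that this perturbation does not propagate. Verifying that such an invariant survives the addition of an arbitrary ear, equivalently that the recolouring never cascades out of control, is the heart of the proof.
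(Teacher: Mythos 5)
Your overall strategy --- induction on the number of ears in a Whitney ear decomposition, with a cycle as the base case and a case analysis on how the new ear is coloured --- is exactly the strategy of the paper, and your framing of the induction (fix $(a,b)$ with $a+b=\tau(G)$ once and for all, and show every $G_i$ admits an $(a,b)$-partition, which is legitimate because $\tau(G_i)\le\tau(G)$) is if anything cleaner than the paper's, which instead invokes $\tau$-partitionability of the smaller graph for a pair $(a_1,b_1)$ with $a_1\le a$, $b_1\le b$, $a_1+b_1=\tau(G_{i-1})$. Your base case and your structural observation about how paths traverse the interior of an ear are both correct. The problem is that you stop at precisely the point where the proof has to be done: you state that a bare ``extend the partition'' induction fails because adding the ear (in particular a chord, $m=0$) can splice two monochromatic paths of $G_{i-1}$ ending at $x$ and $y$ into one of order exceeding $a$, and you then only gesture at two possible remedies (a strengthened inductive invariant controlling path-lengths at potential ear-endpoints, or a bounded local recolouring) without carrying either out. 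Since the ear-endpoints of future ears are not known when the partition of $G_{i-1}$ is built, the ``strengthened invariant'' would have to control monochromatic path-lengths at \emph{every} vertex simultaneously, and you give no candidate invariant nor any argument that one survives the addition of an ear. As submitted, the inductive step --- which you yourself identify as the heart of the proof --- is missing.

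For comparison, the paper confronts the same difficulty head-on in its Case 1 ($r=0$, i.e.\ your $m=0$ chord case with $x,y$ in the same part $A'$): it does not recolour locally but instead identifies, for every path $R_i$ of order at least $a+1$ in $\langle A'\rangle$ of the new graph, the $(a+1)$-st vertex $u_{a+1}^i$ from the origin of $R_i$, and migrates the set of all such vertices from $A'$ to $B'$; it then argues (via the bound $\tau(H)=a+b$) that these migrated vertices cannot attach to long paths of $\langle B'\rangle$, so the $B$-side stays below $b$ while every offending path on the $A$-side is truncated. Whether or not that argument is itself airtight, it is a concrete mechanism of exactly the kind your proposal acknowledges it needs but does not supply; your cases $m=1$ and $m\ge 2$ (handled in the paper by placing the lone internal vertex on the safe side, respectively by alternating colours along the ear) are comparatively routine, and it is the $m=0$ splicing case that your sketch leaves open.
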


\begin{proof}
Let $G$ be a 2-connected graph. By Whitney's Theorem there exists an
ear decomposition $S = \{P_0, P_1, \dots, P_n\}$, where $P_0$ is a
cycle and $P_i$ for $i \geq 1$ is an ear of $P_0 \cup P_1 \cup \dots
\cup P_i$.  We prove that $G$ is $\tau$-partitionable by induction on
$|S|$.

When $|S| = 1$, $S = \{P_0\}$. Then $G = P_0$. Thus, $G$ is a
cycle. As every cycle is $\tau$-partitionable, $G$ is
$\tau$-partitionable.  By induction, we assume that if $G$ is any
2-connected graph having an ear decomposition $S = \{P_0, P_1, \dots,
P_{k-1}\}$, that is, with $|S| = k$, then $G$ is $\tau$-partitionable.

Let $H$ be a 2-connected graph with an ear decomposition \break $S =
\{P_0, P_1, \dots, P_{k-1}, P_k\}$. That is, $|S| = k + 1$. We claim that $H$ is\\ $\tau$-partitionable. Let $(a,b)$ be a pair of positive integers with $a+b=\tau(H)$. Since $H$ is having the ear decomposition $S=\{P_0,P_1,\dots,P_{k-1},P_k\}$, $H$ can be considered as a 2-connected graph obtained from the 2-connected graph $G$ having the ear decomposition $S'=\{P_0,P_1,\dots,P_{k-1}\}$ by adding a new path (ear) $P_k:xv_1v_2\dots v_ry$ to $G$, where $x,y \in V(G)$ and $v_1,v_2,\dots,v_r$ are new vertices to $G$. As $G$ is a 2-connected graph having the ear decomposition $S'=\{P_0,P_1,\dots,P_{k-1}\}$ with $|S|=k$, by induction $G$ is $\tau$-partitionable. Let $(a_1,b_1)$ be a pair of positive integers such that $a_1 \leq a$, $b_1 \leq b$ with $\tau(G)=a_1+b_1$. Since $G$ is $\tau$-partitionable, there exists an $(a_1,b_1)$ partition $(A',B')$ of $V(G)$ such that $\tau(G(\langle A' \rangle)) \leq a_1$ and $\tau(G(\langle B' \rangle)) \leq b_1$. In order to prove our claim that $H$ is $\tau$-partitionable, we define an $(a,b)$-partition $(A,B)$ of $V(H)$ from the $(a_1,b_1)$ partition $(A',B')$ of $V(G)$ as well as using the path $P_k: xv_1v_2\dots v_ry$. The construction of an $(a,b)$-partition $(A,B)$ of $V(H)$ is given under three cases, depending on $r=0$, $r=1$ and $r \geq 2$, where $r$ is the number of new vertices in the path $P_k$.

\noi\textbf{Case 1.} $r = 0$

Then $P_k : xy$, where $x$ and $y$ are the vertices of $G$.\\
Thus, $H = G + xy$. This implies, $V(H) = V(G)$. 

\noi\textbf{Case 1.1.} Suppose $x$ and $y$ are in different parts of
the partition $(A^\prime, B^\prime)$ of 

\hspace{1.2cm} $V(G)$.

Then, as $x$ and $y$ are in different parts of the partition
$(A^\prime, B^\prime)$ of $V(G)$, the introduction of the new edge
$xy$ between the vertices $x$ and $y$ does not increase the length of
any path either in $G(\langle A^\prime \rangle)$ or in $G(\langle
B^\prime \rangle)$.  Further, as $V(H) = V(G)$, we have
$\tau(H(\langle A^\prime \rangle)) = \tau(G(\langle A^\prime \rangle))
\leq a_1 \leq a \ \text{ and } \ \tau(H(\langle B^\prime \rangle)) =
\tau(G(\langle B^\prime \rangle)) \leq b_1 \leq b.$
Thus, $(A^\prime, B^\prime)$ is a required $(a, b)$-partition of
$V(H)$.
\noi\textbf{Case 1.2.} Suppose $x$ and $y$ are in the same part of the
partition $(A^\prime, B^\prime)$ of 

\hspace{1.2cm} $V(G)$.

Without loss of generality, we assume that $x$ and $y$ are in
$A^\prime$. \\
Suppose $\tau(H(\langle A^\prime \rangle)) \leq a$.  Then,
as $\tau(H(\langle B^\prime \rangle)) \leq b_1 \leq b$, the
$(A^\prime, B^\prime)$ is a required $(a, b)$-partition of $V(H)$.\\
Suppose $\tau(H(\langle A^\prime \rangle)) > a$, then observe that the
addition of the edge $xy$ to $G$ has increased the order of some of
the longest paths (at least one longest path) in $H(\langle A^\prime
\rangle)$ from $a_1$ to $t = a_1 + k > a$, where $k \geq
1$. On the other hand, any path of order $t > a$ in $H(\langle A^\prime
\rangle)$ must contain the edge $xy$ also.

Let $P : u_1u_2u_3 \dots u_iu_{i+1} \dots u_{a_1} u_{a_{1+1}} \dots
u_t$ be any path of order $t > a$.  Then, note that the edge $xy = u_j
u_{j+1}$ for some $j$, $1 \leq j \leq t-1$ and $t \leq 2a_1$.

\begin{observation}
If we remove the vertex $u_{a+1}$ from the path $P$, then we obtain two
subpaths $u_1 u_2 \dots u_i u_{i+1} \dots u_{a-1} u_a$, say $P^\prime$
and $u_{a+2} u_{a+3} \dots u_{t-1} u_t$, say $P^{\prime\prime}$ of
$P$. The number of vertices in $P^\prime$ is exactly $a$ and the
number of vertices in $P^{\prime\prime}$ is $t-(a+1) \leq t-(a_1+1)
\leq 2a_1-a_1-1 = a_1-1 < a_1 \leq a$.
\end{observation}

\begin{observation}
Consider the subpath $Q : u_1 u_2 \dots u_{a-1} u_a u_{a+1}$ of
$P$. Then observe that the end vertex $u_{a+1}$ of $Q$ cannot be
adjacent to any of the end vertices of any path of order $b$ in the
induced subgraph $H(\langle B^\prime \rangle) = G(\langle B^\prime
\rangle)$ in $H$. 
\end{observation}

For, suppose $u_{a+1}$ is adjacent to an end vertex of a path, say $Z$
of order $b$ in $H(\langle B^\prime \rangle) = G(\langle B^\prime
\rangle)$. Let $Z = v_1 v_2 \dots v_b$.  Without loss of generality,
let $u_{a+1}$ be adjacent to $v_1$. Then, there exists a path $Q \cup
Z : u_1 u_2 \dots u_{a-1} u_a u_{a+1} v_1 v_2 \dots v_b$ of order
$a+b+1 > a+b = \tau(H)$, a contradiction (Similar contradiction hold
good if $u_{a+1}$ is adjacent $v_b$).

Let $\{R_0, R_1, \dots, R_t\}$ be the set of all paths in $H(\langle
A^\prime \rangle)$ of order at least $a+1$. For $1 \leq i \leq t$, let
$u_{a+1}^i$ denote the terminus vertex of the subpath of $R_i$ of
order $a+1$ and having its origin as the origin of $R_i$. Let
$\{u_{a+1}^{\alpha_1}, u_{a+1}^{\alpha_2}, \dots,
u_{a+1}^{\alpha_h}\}$ be the set of distinct vertices from the
vertices $u_{a+1}^1, u_{a+1}^2, \dots, u_{a+1}^t$, where $h \leq
t$. Suppose $\{u_{a+1}^{\alpha_1}, u_{a+1}^{\alpha_2}, \dots,
u_{a+1}^{\alpha_h}\}$ induces any path in $H(\langle A^\prime
\rangle)$.  Consider any such path $X : u_{a+1}^{\beta_1}
u_{a+1}^{\beta_2} \dots u_{a+1}^{\beta_c}$, where $\{\beta_1, \beta_2,
\dots, \beta_c\} \subseteq \{\alpha_1, \alpha_2, \dots,
\alpha_h\}$. Then, for $1 \leq i \leq c$, any vertex
$u_{a+1}^{\beta_i}$ divides the path $X$ into three subpaths
$u_{a+1}^{\beta_1} u_{a+1}^{\beta_2} \dots u_{a+1}^{\beta_{i-1}}$,
$u_{a+1}^{\beta_i}$, and $u_{a+1}^{\beta_{i+1}} u_{a+1}^{\beta_{i+2}}
\dots u_{a+1}^{\beta_c}$.

\begin{figure}[h]
\centering
  \includegraphics{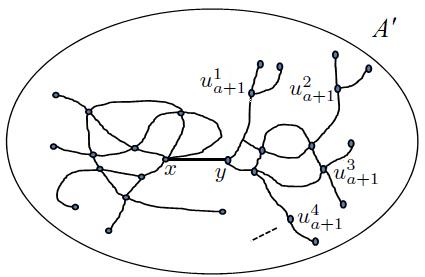}
  \caption{Structures of various paths of order $t \geq a+1$ in $A'$}
  \label{fig1}
\end{figure}

\noi\textbf{Claim 1.} For every $i$, $1 \leq i \leq h$, the vertex
$u_{a+1}^{\beta_i}$ cannot be adjacent to any of the end vertices of
any path of order greater than or equal to $b-q$ in $H(\langle
B^\prime \rangle)$, where $q = i-1$ or $c-i$.\\
First we ascertain  $b<q+1$ in Observation 2.3 then we prove the Claim 1.

\begin{observation}
$b \geq q+1$ \end{observation}
For, suppose $b<q+1$.
If $q = c-i$, then consider the path,
$$K = u_{a+1}^{\beta_i} u_{a+1}^{\beta_{i+1}} \dots u_{a+1}^{\beta_c} u_a^{\beta_c} u_{a-1}^{\beta_c} \dots u_2^{\beta_c} u_1^{\beta_c}$$
in $H(\langle A^\prime \rangle)$ having $1+c-i+a=1+q+a $ vertices. As $q+1 > b$, the path $K$ has at least $a+b+1$ vertices. This implies there exists a path of order at least $a+b+1$ in $H(\langle A^\prime \rangle)$. A contradiction to the fact that $\tau(H) = a+b$. Similarly, if        $q = i-1$, then consider the path, $$K^\prime = u_{a+1}^{\beta_i} u_{a+1}^{\beta_{i-1}} \dots              u_{a+1}^{\beta_1} u_a^{\beta_1} u_{a-1}^{\beta_1} \dots u_2^{\beta_1} u_1^{\beta_1}$$ 
in $H(\langle A^\prime \rangle)$ having $i+a = 1+q+a$ vertices. As $q+1 > b$, the path $K^\prime$ has at least $a+b+1$ vertices. This implies that there exists a path of order at least $a+b+1$ in in $H(\langle A^\prime \rangle)$. A contradiction to the fact that $\tau(H) = a+b$. Hence, $b \geq q+1$.\\

To prove Claim 1, we suppose $u_{a+1}^{\beta_i}$, for some $i$,
$1 \leq i \leq h$ is adjacent to an end vertex of a path of
order $l \geq b-q$ in $H(\langle B^\prime \rangle)$. Let $Y = w_1 w_2 w_3 \dots w_{l}$ be a path of
order $l \geq b-q$ in $H(\langle B^\prime \rangle)$ such that (without loss
of generality) $w_{l}$ is adjacent to the vertex $u_{a+1}^{\beta_i}$.

\noi\textbf{Case 1.2a.} $q = c-i$\\ Then consider the path $S = w_1
w_2 \dots w_{l} u_{a+1}^{\beta_i} u_{a+1}^{\beta_{i+1}} \dots
u_{a+1}^{\beta_c} u_a^{\beta_c} u_{a-1}^{\beta_c} \dots u_2^{\beta_c}
u_1^{\beta_c}$, where $u_1^{\beta_c} u_2^{\beta_c} \dots
u_{a}^{\beta_c} u_{a+1}^{\beta_c}$ is a subpath of $R_{\beta_c}$ of
order $a+1$ having the vertex $u_1^{\beta_c}$, the origin of
$R_{\beta_c}$ as its origin.  As $Y : w_1 w_2 w_3 \dots w_{l}$ is
the path in $H(\langle B^\prime \rangle)$ such that $w_{l}$ is
adjacent to $u_{a+1}^{\beta_i}$, it follows that $S$ is a path in $H$
having the order $l+1+c-i+a \geq b-q+1+q+a=b+a+1 > \tau(H)$, a contradiction.

\noi\textbf{Case 1.2b.} $q = i-1$

Then consider the path $S^\prime = w_1 w_2 \dots w_{l}
u_{a+1}^{\beta_i} u_{a+1}^{\beta_{i-1}} \dots u_{a+1}^{\beta_2}
u_{a+1}^{\beta_1} u_a^{\beta_1} u_{a-1}^{\beta_1}$ $\dots$
$u_2^{\beta_1} u_1^{\beta_1}$, where $u_1^{\beta_1} u_2^{\beta_1}
\dots u_a^{\beta_1} u_{a+1}^{\beta_1}$ is the subpath of $R_{\beta_1}$
of order $a+1$ having the vertex $u_1^{\beta_1}$, the origin of
$R_{\beta_1}$ as its origin. As $Y : w_1 w_2 w_3 \dots w_{l}$ is the
path in $H(\langle B^\prime \rangle)$ such that $w_{l}$ is adjacent
to $u_{a+1}^{\beta_i}$, it follows that $S^\prime$ is a path in $H$
having the order $l+1+i-1+a \geq b-q+1+q+a=b+a+1 > \tau(H)$, a
contradiction.\\ Hence the Claim 1.

Thus, it follows from the Claim 1 that 
\begin{align}\label{eq1}
\tau(H(\langle B^\prime \cup \{u_{a+1}^{\alpha_1}, u_{a+1}^{\alpha_2}, \dots,
u_{a+1}^{\alpha_h}\} \rangle)) \leq b
\end{align}
From Observation 1, it follows that
\begin{align}\label{eq2}
\tau(H(\langle A^\prime \backslash \{u_{a+1}^{\alpha_1}, u_{a+1}^{\alpha_2}, \dots,
u_{a+1}^{\alpha_h}\} \rangle)) \leq a
\end{align}
Let $A = A^\prime \backslash \{u_{a+1}^{\alpha_1}, u_{a+1}^{\alpha_2}, \dots,
u_{a+1}^{\alpha_h}\}$ and $B = B^\prime \cup \{u_{a+1}^{\alpha_1}, u_{a+1}^{\alpha_2}, \dots,
u_{a+1}^{\alpha_h}\}$. Then, from (\ref{eq1}) and (\ref{eq2}) it follows that
$\tau(H(\langle A \rangle) \leq a$ and $\tau(H(\langle B \rangle) \leq
b$.\\
Hence $(A, B)$ is a required $(a, b)$-partition of $H$.

\noi\textbf{Case 2.} $r = 1$\\
Then $P_k : xv_1y$.

\noi\textbf{Case 2.1.} Both $x$ and $y$ belong to the same partition
$A^\prime$ or $B^\prime$. \\ 
Without loss of generality, we assume that $x, y \in B^\prime$. That
is, $x, y \not\in A^\prime$. Then $(A^\prime \cup \{v_1\}, B^\prime)$
is a required $(a, b)$-partition of $V(H)$.\\
\noi\textbf{Case 2.2.} The vertices $x$ and $y$ belong to different
partitions $A^\prime$ and $B^\prime$.

Without loss of generality, we assume that $x \in A^\prime$ and $y \in
B^\prime$. If $x$ is not an end vertex of a path of order $a$ in
$H(\langle A^\prime \rangle)$, then $(A^\prime \cup \{v_1\},
B^\prime)$ is a required $(a, b)$-partition of $V(H)$. If $x$ is an end vertex
of a path of order $a$ in $H(\langle A^\prime \rangle)$, then $y$
cannot be an end-vertex of a path of order $b$ in $H(\langle B^\prime
\rangle)$ (otherwise, $H$ would have a path of order $a+b+1 >
\tau(H)$).  Therefore $(A^\prime, B^\prime \cup \{v_1\})$ is a required $(a,
b)$-partition of $V(H)$.

\noi\textbf{Case 3.} $r \geq 2$\\
Colour all vertices of $A^\prime$ with red colour and colour all the
vertices of $B^\prime$ with blue colour.  Since the vertices $x, y \in
V(G)$, they are coloured with either blue or red colour. Without loss
of generality, we assume that $x \in A^\prime$. Give $v_r$ the
alternate colour to that of the vertex $y$. As $x$ is coloured with
red colour, colour the vertex $v_1$ with blue colour. In general, for
$2 \leq i \leq r-1$, sequentially colour the vertex $v_i$ with the
alternate colour to the colour of the vertex $v_{i-1}$. Then observe
that $P_k$ contains no induced monochromatic subgraph of order greater
than 2 and no monochromatic path in $A^\prime$ or in $B^\prime$ can be
extended to include any of the vertices $v_1, v_2, \dots, v_r$ of
$P_k$.

Let $X_1$ be the set of all red coloured vertices of $P_k - \{x, y\}$
and let $X_2$ be the set of all blue coloured vertices of $P_k -\{x,
y\}$. Then $H(\langle A^\prime \cup X_1 \rangle) \leq a_1 \leq a$ and
$H(\langle B^\prime \cup X_2 \rangle) \leq b_1 \leq b$.  Hence
$(A^\prime \cup X_1, B^\prime \cup X_2)$ is a required $(a,
b)$-partition of $H$.\\
Thus, $H$ is $\tau$-partitionable. This completes the induction. Hence every 2-connected graph is $\tau$-partitionable.
\end{proof}

The following Corollary \ref{cor2.1} is an immediate consequence of
Theorem \ref{thm1.1} and Theorem \ref{thm2.2}.

\begin{corollary}\label{cor2.1}
Every graph is $\tau$-partitionable.
\end{corollary}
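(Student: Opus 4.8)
The plan is to obtain the statement directly by combining the two theorems already established in the paper. Let $G$ be an arbitrary graph. By Theorem \ref{thm2.2}, every $2$-connected graph is $\tau$-partitionable, so the hypothesis of Theorem \ref{thm1.1} (Dunbar and Frick) is satisfied. Theorem \ref{thm1.1} then asserts precisely that, under this hypothesis, every graph---in particular $G$---is $\tau$-partitionable. That is the whole argument: a single application of modus ponens, with no further bookkeeping.

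Consequently there is no genuine obstacle remaining at this stage; all of the substance resides in the two inputs. The creative work lies in Theorem \ref{thm2.2}, whose proof builds an $(a,b)$-partition of a $2$-connected graph by induction along a Whitney ear decomposition, and in Theorem \ref{thm1.1}, which reduces the general case to the $2$-connected one. For orientation I would recall the shape of that reduction---induct on the block--cut-vertex structure of $G$, partition a leaf block (an edge or a $2$-connected graph) using the trivial or the $2$-connected case, partition the remainder by induction, and splice the two partitions at the shared cut vertex while checking that no longest path is lengthened across it---but since Theorem \ref{thm1.1} is already available, invoking it is all that is needed here.

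Finally I would dispose of the degenerate cases in a single line: if $\tau(G) \le 1$ there is no pair $(a,b)$ of positive integers with $a+b = \tau(G)$, so the defining condition of $\tau$-partitionability is vacuously met; every remaining case is covered by the two cited theorems, applied block by block. Hence every graph is $\tau$-partitionable, which is the assertion of Corollary \ref{cor2.1}.
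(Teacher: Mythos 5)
Your proposal is correct and is exactly the paper's argument: Corollary \ref{cor2.1} is obtained by a single application of modus ponens, combining Theorem \ref{thm2.2} (every $2$-connected graph is $\tau$-partitionable) with the Dunbar--Frick reduction in Theorem \ref{thm1.1}. The additional remarks you make about the block structure and the vacuous case $\tau(G)\le 1$ are harmless but not needed, since the paper simply cites the two theorems and stops.
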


It is clear that Corollary \ref{cor2.1} settles the Path Partition
Conjecture affirmatively. Thus, ``{\bf the Path Partition Conjecture is true}''.

The following Theorem \ref{thm2.3} called ``Path Partition Theorem''
is a simple implication of Corollary \ref{cor2.1}.

\begin{theorem}[Path Partition Theorem] \label{thm2.3}
For every graph $G$ and for every $t$-tuple $(a_1, a_2, 
\dots, a_t)$ of positive integers with $a_1 + a_2 + \cdots + a_t =
\tau(G)$ and $t \geq 1$, there exists a partition $(V_1, V_2, 
\dots, V_t)$ of $V(G)$ such that $\tau(G(\langle V_i \rangle)) \leq
a_i$, for every $i$, $1 \leq i \leq t$.
\end{theorem}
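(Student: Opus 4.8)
The plan is to prove Theorem~\ref{thm2.3} by induction on $t$, peeling off one block at a time with the help of Corollary~\ref{cor2.1} (every graph is $\tau$-partitionable). For $t = 1$ the only admissible tuple is $(\tau(G))$, and $V_1 = V(G)$ works. For the inductive step, given $(a_1, \dots, a_t)$ with $a_1 + \cdots + a_t = \tau(G)$, set $a = a_1 + \cdots + a_{t-1}$ and $b = a_t$; these are positive integers with $a + b = \tau(G)$, so by Corollary~\ref{cor2.1} there is an $(a,b)$-partition $(A, B)$ of $V(G)$ with $\tau(G(\langle A \rangle)) \le a$ and $\tau(G(\langle B \rangle)) \le b$. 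Taking $V_t := B$ handles the last block, and it remains to partition $A$ into $V_1, \dots, V_{t-1}$ with $\tau(G(\langle V_i \rangle)) \le a_i$.

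The step I expect to be the only real obstacle is the recursion on $G(\langle A \rangle)$: the statement as phrased requires the target integers to sum \emph{exactly} to the detour order, whereas we only know $\tau(G(\langle A \rangle)) \le a_1 + \cdots + a_{t-1}$, possibly strictly — and $\tau(G(\langle A \rangle))$ may even be smaller than $t-1$, so one cannot simply decrease the $a_i$ to positive integers summing to $\tau(G(\langle A \rangle))$. The fix is to prove a slightly more flexible statement by induction and read off Theorem~\ref{thm2.3} as the equality case: for every graph $H$ and every $s$-tuple $(b_1, \dots, b_s)$ of positive integers with $b_1 + \cdots + b_s \ge \tau(H)$, there is a partition $(W_1, \dots, W_s)$ of $V(H)$ with $\tau(H(\langle W_i \rangle)) \le b_i$ for all $i$. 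With this formulation the recursion closes: after splitting off $B$ one has $\tau(G(\langle A \rangle)) \le a_1 + \cdots + a_{t-1}$, so the induction hypothesis applies to $G(\langle A \rangle)$ with $(a_1, \dots, a_{t-1})$, and since each resulting block $V_i$ lies inside $A$ we have $G(\langle A \rangle)(\langle V_i \rangle) = G(\langle V_i \rangle)$, giving $\tau(G(\langle V_i \rangle)) \le a_i$ as required.

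The remaining bookkeeping inside the strengthened induction is routine: for $s = 1$ take $W_1 = V(H)$; for $s \ge 2$, if $b_1 + \cdots + b_{s-1}$ already bounds $\tau(H)$ then recurse on $(b_1, \dots, b_{s-1})$ and set $W_s = \emptyset$ (with $\tau(\emptyset) = 0 \le b_s$), and otherwise $a := b_1 + \cdots + b_{s-1}$ and $b := \tau(H) - a$ are positive integers with $a + b = \tau(H)$ and $b \le b_s$, to which Corollary~\ref{cor2.1} and the argument of the previous paragraph apply. Finally, I would record the promised corollary: applying the theorem with all $a_i = n$ (so $t = \lceil \tau(G)/n \rceil$, the last block having size $\le n$) yields the Frick--Bullock bound $\chi_n(G) \le \lceil \tau(G)/n \rceil$, since a block $V_i$ with $\tau(G(\langle V_i \rangle)) \le n$ contains no path of order greater than $n$.
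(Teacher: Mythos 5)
Your proposal is correct, and at its core it is the same argument the paper uses: iterate Corollary~\ref{cor2.1}, splitting off one block of the tuple at each step and recursing on the induced subgraph carrying the remaining entries. (You peel off the last entry $a_t$ where the paper peels off the first entry $a_1$; that difference is cosmetic.) The substantive difference is your strengthened induction hypothesis allowing $b_1 + \cdots + b_s \ge \tau(H)$ rather than requiring equality, and this is not a stylistic flourish: the paper's own iteration quietly assumes that after the first split one has $\tau(G(\langle U_2 \rangle)) = a_2 + \cdots + a_t$, writing ``for the pair $(c,d)$ with $c+d=\tau(H)$, where $c=a_2$ and $d=a_3+\cdots+a_t$,'' whereas Corollary~\ref{cor2.1} only guarantees $\tau(G(\langle U_2 \rangle)) \le a_2 + \cdots + a_t$, possibly strictly. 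As you observe, one cannot always repair this by shrinking the remaining $a_i$ to positive integers summing to the new detour order (that detour order may be smaller than the number of remaining blocks), so the clean fix is exactly the $\ge$ formulation you prove, with empty blocks permitted once the budget already dominates the detour order. Your write-up is therefore a more careful rendering of the same proof; the observation that $G(\langle A \rangle)(\langle V_i \rangle) = G(\langle V_i \rangle)$ and the derivation of the bound $\chi_n(G) \le \lceil \tau(G)/n \rceil$ likewise match what the paper does in its Corollary~\ref{cor2.2}. (Of course, everything here is conditional on Corollary~\ref{cor2.1}, i.e.\ on the paper's Theorem~\ref{thm2.2}, which your proposal takes as given.)
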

\begin{proof}
Let $G$ be a graph. Consider any $t$-tuple $(a_1,a_2,\dots,a_t)$ of positive integers with $a_1+a_2+\dots+a_t=\tau(G)$, and $t \geq 1$. Then by Corollary 2.1, for the pair of positive integers $(a,b)$ with $a+b=\tau(G)$, where $a=a_1$ and $b=a_2+\cdots+a_t$, there exists a partition $(U_1,U_2)$ of $V(G)$ such that $\tau(G(\langle U_1\rangle)) \leq a = a_1$ and $\tau(G(\langle U_2 \rangle)) \leq b = a_2+a_3+\cdots+a_t$. Consider the graph $H=G(\langle U_2 \rangle)$. Then for the pair of positive integers $(c,d)$ with $c+d=\tau(H)=\tau(G(\langle U_2 \rangle))$, where $c=a_2$ and $d=a_3+a_4+\dots+a_t$, by Corollary 2.1, there exists a partition $(U_{21},U_{22})$ of $V(H)$ such that $\tau(H(\langle U_{21}\rangle )) \leq c =a_2$ and $\tau(H(\langle U_{22}\rangle )) \leq d=a_3+a_4+\dots+a_t$. As $H(\langle U_{21} \rangle)=G(\langle U_{21} \rangle)$ and $H(\langle U_{22} \rangle)=G(\langle U_{22} \rangle)$, we have $\tau(G(\langle U_{21} \rangle))\leq c = a_2$ and $\tau(G(\langle U_{22} \rangle))\leq d = a_3+a_4+\cdots+a_t$. Similarly, if we consider the pair of positive integers $(x,y)$ with $x+y=\tau(Q)$, where $Q=G(\langle U_{22} \rangle)$, $x=a_3$ and $y=a_4+a_5+\cdots+a_t$, by Corollary 2.1, we get a partition $(U_{31},U_{32})$ such that $\tau(G(\langle U_{31} \rangle)) \leq x =a_3$ and $\tau(G(\langle U_{32} \rangle)) \leq y =a_4+a_5+\cdots+a_t$. Continuing this process, finally we get a partition $(V_1,V_2,\cdots,V_t)$ of $V(G)$ such that $\tau(G(\langle V_i \rangle)) \leq a_i$, for every $i$, $1 \leq i \leq t$, where $V_1=U_1$, $V_2=U_{21}$, $V_3=U_{31}$ and so on. This completes the proof.     
\end{proof}
\begin{corollary}\label{cor2.2}
The $n^{th}$ detour chromatic number $\chi_n(G) \leq \left\lceil
\frac{\tau(G)} {n} \right\rceil$ for every graph $G$ and for every $n
\geq 1$.
\end{corollary}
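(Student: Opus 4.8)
The plan is to obtain this as a direct consequence of the Path Partition Theorem (Theorem~\ref{thm2.3}). Write $\tau = \tau(G)$ and put $t = \lceil \tau/n \rceil$; I aim to construct an $n$-detour colouring of $G$ that uses exactly $t$ colours, which at once yields $\chi_n(G) \le t = \lceil \tau(G)/n \rceil$.

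First I would exhibit a suitable $t$-tuple of positive integers summing to $\tau$ with every entry at most $n$. From $t = \lceil \tau/n \rceil$ we get $(t-1)n < \tau \le tn$, so $s := \tau - (t-1)n$ satisfies $1 \le s \le n$. Taking $(a_1, a_2, \dots, a_t) = (n, n, \dots, n, s)$ gives positive integers with $a_1 + a_2 + \cdots + a_t = \tau(G)$ and $a_i \le n$ for every $i$.

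Next I would apply Theorem~\ref{thm2.3} to $G$ with this tuple to obtain a partition $(V_1, V_2, \dots, V_t)$ of $V(G)$ with $\tau(G(\langle V_i \rangle)) \le a_i \le n$ for each $i$. Colouring every vertex of $V_i$ with colour $i$, any monochromatic path of $G$ under this colouring lies entirely within some $G(\langle V_i \rangle)$ and hence has order at most $\tau(G(\langle V_i \rangle)) \le n$; so no monochromatic path has order greater than $n$. Thus the colouring is an $n$-detour colouring with $t$ colours, and $\chi_n(G) \le \lceil \tau(G)/n \rceil$.

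I expect no genuine obstacle here: all the depth resides in Theorem~\ref{thm2.2}, Corollary~\ref{cor2.1}, and the derived Path Partition Theorem. The only things that need checking are elementary: that the chosen tuple is legitimate (each $a_i$ a positive integer bounded by $n$, with the correct sum, including the degenerate case $n \ge \tau(G)$, where $t = 1$ and the whole vertex set is a single colour class of detour order $\tau(G) \le n$), together with the routine observation that a vertex partition into parts each of detour order at most $n$ is exactly an $n$-detour colouring using that many colours.
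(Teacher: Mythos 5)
Your proposal is correct and follows essentially the same route as the paper: apply the Path Partition Theorem to a $\lceil \tau(G)/n \rceil$-tuple of entries at most $n$ summing to $\tau(G)$, then colour each part with its own colour so that every monochromatic path has order at most $n$. The only difference is cosmetic — you unify with $s=\tau-(t-1)n$ the two cases ($\tau(G)$ a multiple of $n$ or not) that the paper treats separately.
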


\begin{proof}
Let $G$ be any graph. For every $n \geq 1$, consider the
$\frac{\tau(G)} {n}$-tuple $(n, n, \dots, n)$ if $\tau(G)$ is a
multiple of $n$, while if $\tau(G)$ is not a multiple of $n$, then
consider the $\left\lceil \frac{\tau(G)} {n} \right\rceil$-tuple $(n,
n, \dots, n, \nu)$, where $\nu = \tau(G)$ (mod $n$). Then, by Path
Partition Theorem, there exist a partition $(V_1, V_2, \dots,
V_t)$, where
\[t = \begin{cases} \frac{\tau(G)} {n} & \text{if } \tau(G) \text{ is
    a multiple of } n \\ \left\lceil \frac{\tau(G)} {n} \right\rceil &
  \text{if } \tau(G) \text{ is not a multiple of } n \end{cases}\]
such that $\tau(G(\langle V_i \rangle)) \leq n$, for every $i$, $1 \leq i \leq
t$. For each $i$, $1 \leq i \leq t$, assign the (distinct) colour $i$
to all the vertices in each $G(\langle V_i \rangle)$. Then every
monochromatic path in $G$ has the order at most $n$. Thus, $\chi_n(G)
\leq \left\lceil \frac{\tau(G)} {n} \right\rceil$.
\end{proof}

Corollary \ref{cor2.2} essentially ascertains that ``{\bf Frick-Bullock
Conjecture is true}''.\\
{\bf Remark 1:} It is clear from the definition of $\chi_n(G)$, when $n=1$, $\chi_1(G) = \chi(G)$. Thus, by Corollary \ref{cor2.2}, for a graph $G$, $\chi(G) = \chi_1(G) \leq \tau(G)$. This upper bound for the chromatic number of a graph $G$ that $\chi(G) \leq \tau(G)$ is the well known Gallai's Theorem \cite{gall}.
\section{An Upper Bound for Star Chromatic Number}

In this section we obtain an upper bound for star chromatic number as
a consequence of path partition theorem.

\begin{theorem}\label{star}
Let $G$ be a graph. Then the star chromatic number of $G$,
$\chi_s(G) \leq \tau(G)$.
\end{theorem}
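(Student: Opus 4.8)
The plan is to derive the bound $\chi_s(G) \le \tau(G)$ directly from the Path Partition Theorem (Theorem \ref{thm2.3}). Recall that a star colouring is a proper vertex colouring in which every $P_4$ receives at least three colours; equivalently, it is a proper colouring in which the union of any two colour classes induces a star forest (a disjoint union of stars), i.e. induces no $P_4$. So the task reduces to partitioning $V(G)$ into colour classes that are independent sets and such that the union of any two of them induces a $P_4$-free graph.

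The first step is to apply Theorem \ref{thm2.3} with the $\tau(G)$-tuple $(1, 1, \dots, 1)$ (which has $t = \tau(G)$ entries summing to $\tau(G)$), obtaining a partition $(V_1, V_2, \dots, V_{\tau(G)})$ of $V(G)$ with $\tau(G(\langle V_i \rangle)) \le 1$ for every $i$. Since $\tau$ of an induced subgraph is $\le 1$ exactly when that subgraph has no edge, each $V_i$ is an independent set, so assigning colour $i$ to every vertex of $V_i$ is already a proper colouring with $\tau(G)$ colours. The remaining and more delicate step is to check the star condition, i.e. that for every pair $i \ne j$ the induced subgraph $G(\langle V_i \cup V_j \rangle)$ contains no path on four vertices. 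A $P_4$ inside $V_i \cup V_j$ would, since $V_i$ and $V_j$ are independent, have to alternate between the two classes, and it has order $4$; but I would want to rule this out using $\tau(G)$ — here one has to be careful, because a single $P_4$ has order $4$ and $\tau(G)$ may be much larger than $4$, so the bound alone does not immediately forbid it.

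Consequently the main obstacle is that the bare Path Partition Theorem, applied with the all-ones tuple, does not by itself guarantee the star property; one needs to refine the partition. I would handle this by a local recolouring / exchange argument: starting from the proper colouring above, while some pair of classes $V_i, V_j$ induces a $P_4$, say $u_1u_2u_3u_4$ with $u_1,u_3 \in V_i$ and $u_2,u_4 \in V_j$, move one of the internal vertices (or an endpoint) to a third class that is "free" for it, arguing that such a class exists because $v$ has at most $\deg(v)$ forbidden classes and $\tau(G) \ge \chi(G)+\text{(slack)}$ by Gallai's bound — more precisely, I would try to show that whenever no free class exists one can find a long path in $G$ contradicting the value of $\tau(G)$, by concatenating the offending $P_4$ with paths forced to exist through every "blocking" class. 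This potential-function descent (e.g. decreasing the number of bichromatic $P_4$'s, or a lexicographic measure) must be shown to terminate without ever introducing a monochromatic edge, which is where the real work lies.

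Alternatively — and this may be the cleaner route — I would instead invoke a stronger version of path-partitioning: partition $V(G)$ so that each part induces a graph with $\tau \le 2$ using a $\lceil \tau(G)/2 \rceil$-tuple of $2$'s, then further split, or more promisingly, reinterpret the star-colouring requirement as: the colouring is proper \emph{and} acyclic-like on pairs, and reduce it to the detour-chromatic framework of Corollary \ref{cor2.2}. The hard part throughout is bridging the gap between "no long monochromatic path" (what $\tau$-partitioning gives) and "no bichromatic $P_4$" (what star colouring demands), since the latter is a constraint on pairs of classes rather than single classes; I expect the write-up to need an explicit argument that a bichromatic $P_4$, together with the independence of the classes and the maximality/extremality of the chosen partition, forces a path in $G$ of order exceeding $\tau(G)$.
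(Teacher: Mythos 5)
There is a genuine gap here: the step you yourself flag as ``where the real work lies'' --- showing that no pair of colour classes induces a $P_4$ --- is never actually carried out, and the route you sketch for it is not convincing. Starting from the all-ones tuple $(1,1,\dots,1)$ you get $\tau(G)$ independent sets, which is just Gallai's proper colouring; but an arbitrary proper $\tau(G)$-colouring need not be repairable into a star colouring by moving single vertices, and your descent argument (counting ``forbidden classes'' against $\deg(v)$, or concatenating paths through ``blocking'' classes to exceed $\tau(G)$) is only a hope, not a proof. In particular there is no a priori reason a vertex of a bichromatic $P_4$ has a ``free'' class available, nor that a potential function decreases, and nothing in the Path Partition Theorem controls interactions \emph{between} two colour classes, which is exactly what the star condition constrains.

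The paper takes the alternative you mention in passing but do not develop: apply the Path Partition Theorem with the tuple $(2,2,\dots,2)$ (appending a $1$ when $\tau(G)$ is odd), so each part $V_i$ satisfies $\tau(G(\langle V_i\rangle))\le 2$ and hence induces a matching plus isolated vertices. Each part is then properly $2$-coloured with its \emph{own dedicated pair} of colours, using $\tau(G)$ colours in total. This pairing is the key structural device you are missing: any $P_4$ must meet at least two parts, and if it has three vertices in one part, two of them are adjacent and so already carry both colours of that part's pair, forcing three colours on the $P_4$. The only problematic configuration is a $P_4$ split $2$--$2$ between two parts with each pair monochromatic; since each part is a matching plus isolated vertices, this is fixed by a purely local move inside a single part (recolour an isolated vertex, or swap the two colours on one independent edge), with no global descent argument needed. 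So your proposal identifies the correct tool but stalls precisely at the step that requires the $(2,2,\dots,2)$ decomposition rather than the $(1,1,\dots,1)$ one.
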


\begin{proof}
First we prove the result for connected graphs, then the result follows naturally for the disconnected graphs. Let $G$ be a connected graph.

\noi\textbf{Claim 1:} There exists a proper $\tau(G)$-vertex colouring for $G$.

\noi Consider $\tau(G)$. If $\tau(G)$ is even, say $2k$, for some $k
\geq 1$, then consider the $k$-tuple $(2, 2, \dots, 2)$ with $2 + 2 +
2 + \cdots + 2 = 2k = \tau(G)$.  By Path Partition Theorem, there
exists a partition $(V_1, V_2, \dots, V_k)$ such that $\tau(G(\langle
V_i \rangle)) \leq 2$, for every $i$, $1 \leq i \leq k$.  Therefore,
every induced subgraph $G(\langle V_i \rangle)$, for $i$, $1 \leq i
\leq k$ is the union of a set of independent vertices and/or a set of
independent edges. Thus, it is clear that, for $i$, $1 \leq i \leq k$,
each $G(\langle V_i \rangle)$ is proper 2-vertex colourable. Properly
colour the vertices of each $G(\langle V_i \rangle)$ with a distinct
pair of colours $c_{i_1}$ and $c_{i_2}$, for $i$, $1 \leq i \leq
k$. Consequently, this proper 2-vertex colouring of $G(\langle V_i
\rangle)$, for all $i$, $1 \leq i \leq k$ induces a proper
$\tau(G)$-vertex colouring for the graph $G$. If $\tau(G)$ is odd, say $2k+1$, for some $k \geq 1$, then
consider the $k+1$-tuple $(2, 2, \dots, 2, 1)$ with $2 + 2 + 2 +
\cdots + 1 = 2k+1 = \tau(G)$.  Then by Path Partition Theorem there
exists a partition $(V_1, V_2, \dots, V_k, V_{k+1})$ such that
$\tau(G(\langle V_i \rangle)) \leq 2$, for every $i$, $1 \leq i \leq
k$ and $\tau(G(\langle V_{k+1} \rangle)) \leq 1$.  Consequently, the
vertices of each $G(\langle V_i \rangle)$ can be properly coloured
with a distinct pair of colours $c_{i_1}$ and $c_{i_2}$, for $i$, $1
\leq i \leq k$ and the vertices of $G(\langle V_{k+1} \rangle)$ are
colored properly with a distinct color $c_{(k+1)_1}$. Thus, this proper
2-vertex colouring of $G(\langle V_i \rangle)$, for all $i$, $1 \leq i \leq k$
and the proper 1 colouring of $G(\langle V_{k+1} \rangle)$ induce a
proper $\tau(G)$-vertex colouring for the graph $G$. Hence the Claim 1.

\noi\textbf{Claim 2:} $\chi_s(G) \leq \tau(G)$

\noi To prove Claim 2, we show that the vertices of every path of
order four is either coloured with 3 or 4 different colours by the above proper
$\tau(G)$-vertex colouring of $G$ or  if there exists a bicoloured
path of order four in $G$ by the above proper $\tau(G)$-vertex colouring of $G$,
then those vertices of such a bicoloured path of order four are
properly recoloured so that those vertices are coloured with at least
three different colours after the recolouring.

\begin{observation}
As 
\[\tau(G(\langle V_i \rangle)) \leq \begin{cases} 2, & \text{for } 1
  \leq i \leq k \\ 1, & \text{for } i = k+1 \text{ and } \tau(G)
  \text{ is odd} \end{cases}\] 
any path of order four in $G$ must contain vertices from at least two
of induced subgraphs $G(\langle V_i \rangle)$'s, where $1 \leq i \leq
\alpha$, and $\alpha = k$ when $\tau(G)$ is even, while when
$\tau(G)$ is odd, $\alpha = k+1$ \mbox{[}Hereafter $\alpha$ is either $k$ or $k+1$ depending on $\tau(G)$ is even or odd respectively\mbox{]}. If any path of order four of $G$
contains vertices from three or four of the induced subgraphs
$G(\langle V_i \rangle)$'s then such a path has vertices coloured with
three or four colours by the proper $\tau(G)$-vertex colouring of $G$.  Thus,
we consider only those paths of order four in $G$ having vertices from
exactly two of the induced subgraphs $G(\langle V_i \rangle)$'s, where
$1 \leq i \leq \alpha$, for recolouring if it is bicoloured.
\end{observation}

Consider any path $P$ of order
four in $G$ having at least one vertex (at most three vertices) in
$G(\langle V_i \rangle)$ for each $i$, $1 \leq i \leq \alpha$ and at least one vertex (at most three
vertices) in $G(\langle V_j \rangle)$, for every $j$, $1 \leq i < j \leq
\alpha$.

\noi\textbf{Case 1.} Suppose a path $P$ of order four in $G$ has one
vertex in $G(\langle V_i \rangle)$ 

\hspace{0.8cm} and three vertices in $G(\langle V_j \rangle)$, for $i,j$, $1 \leq i < j \leq \alpha$

\noi Then without loss of generality we assume that $u_{i_1}$ is one of
the vertices of $P$ which is in $G(\langle V_i \rangle)$ and we assume
$w_{j_1}, w_{j_2}$ and $w_{j_3}$ are the other three vertices of $P$
which are in $G(\langle V_j \rangle)$. Under this situation, in order
that the path $P$ is to be a path of order four with the vertices
$u_{i_1}, w_{j_1}, w_{j_2}, w_{j_3}$, two of the vertices from the three
vertices $w_{j_1}, w_{j_2}$ and $w_{j_3}$ in $G(\langle V_j \rangle)$
must be adjacent in $G(\langle V_j \rangle)$. Since $V_{k+1}$ is an independent set of vertices, $j \leq k$.  As the vertices of each induced subgraph $G(\langle
V_j \rangle)$ are properly coloured with 2 colours $c_{j_1}, c_{j_2}$,
for $j$, $1 \leq j \leq k$ by the proper $\tau(G)$-vertex colouring, those two adjacent vertices from the three vertices $w_{j_1}, w_{j_2}$ and $w_{j_3}$ in $G(\langle V_j \rangle)$
should have been coloured with two different colours $c_{j_1}, c_{j_2}$ by the $\tau(G)$-vertex colouring.
In $G(\langle V_i \rangle)$ each vertex is coloured with either
$c_{i_1}$ or $c_{i_2}$ by the proper $\tau(G)$ vertex colouring, the vertex $u_{i_1}$ is coloured with either $c_{i_1}$ or $c_{i_2}$ in $G(\langle V_i \rangle)$ by the proper $\tau(G)$-vertex colouring. This implies that the path $P$ of order four having the vertices $u_{i_1}, w_{j_1},
w_{j_2}$ and $w_{j_3}$ are coloured with at least three different
colours by the proper $\tau(G)$-vertex colouring of $G$.

\noi\textbf{Case 2} Suppose a path $P$ of order four in $G$ has
exactly two vertices in

\hspace{0.8cm} $G(\langle V_i \rangle)$ and has exactly two vertices
in $G(\langle V_j \rangle)$.

\noi Let $u_{i_1}$ and $u_{i_2}$ be the two vertices of $P$ in $G(\langle V_i
\rangle)$ and let $w_{j_1}$ and $w_{j_2}$ be the two vertices of $P$ in
$G(\langle V_j \rangle)$.

\noi\textbf{Case 2.1.} Suppose either $u_{i_1}, u_{i_2}$ are coloured
with two different colours 

\hspace{1.3cm} $c_{i_1}, c_{i_2}$ in $G(\langle V_i \rangle)$ or
$w_{j_1}, w_{j_2}$ are coloured with two different colours 

\hspace{1.3cm} $c_{j_1}, c_{j_2}$ in $G(\langle V_j \rangle)$ by the proper $\tau(G)$-vertex colouring.

\noi Then the vertices of the path $P$ of order four having the vertices
$u_{i_1}, u_{i_2}, w_{j_1}$ and $w_{j_2}$ are coloured with three or four
different colours by the proper $\tau(G)$-vertex colouring of $G$.

\noi\textbf{Case 2.2.} Suppose neither the vertices $u_{i_1}, u_{i_2}$
received different colours in 

\hspace{1.3cm} $G(\langle V_i \rangle)$ nor the vertices $w_{j_1},
w_{j_2}$ received different colours in 

\hspace{1.3cm} $G(\langle V_j \rangle)$ by the proper $\tau(G)$-vertex colouring.

\noi Then without loss of generality, we assume that $u_{i_1}, u_{i_2}$
received the same colour $c_{i_1}$ in $G(\langle V_i \rangle)$ and
without loss of generality, we assume that $w_{j_1}, w_{j_2}$ received
the same colour $c_{j_1}$ in $G(\langle V_i \rangle)$ by the $\tau(G)$-vertex colouring. As the vertices of $G$ are properly coloured, the vertices $u_{i_1}$ and $u_{i_2}$
should be non-adjacent in $G(\langle V_i \rangle)$ as well as the
vertices $w_{j_1}$ and $w_{j_2}$ should also be non-adjacent in
$G(\langle V_j \rangle)$. Since for every $h$, $1 \leq h \leq \alpha$, $\tau(G(\langle V_h \rangle)) \leq 2$, $G(\langle V_h \rangle)$ is the union of independent vertices and / or independent edges, every vertex in each $G(\langle V_h \rangle)$ is of degree either 0 or
1. Suppose either $u_{i_1}$ or
$u_{i_2}$ is of degree 0 in $G(\langle V_i \rangle)$. Then without loss
of generality, we assume that $u_{i_1}$ is of degree 0 in $G(\langle
V_i \rangle)$. Since $u_{i_1}$ is not adjacent to any vertex in
$G(\langle V_i \rangle)$, recolour the vertex $u_{i_1}$ with the colour
$c_{i_2}$ [Since vertices of $G(\langle V_i \rangle)$
  are properly coloured with either $c_{i_1}$ or $c_{i_2}$ colours, this recolouring is possible]. Thus,
after this recolouring, the vertices $u_{i_1}, u_{i_2}$, $w_{j_1}$ and
$w_{j_2}$ of the path $P$ have received three different colours. Hence,
we assume neither $u_{i_1}$ nor $u_{i_2}$ is of degree 0 in $G(\langle
V_i \rangle)$. Therefore, the degree of each of the vertices $u_{i_1}$ and
$u_{i_2}$ must be of degree 1 in $G(\langle V_i \rangle)$. As
vertices of each $G(\langle V_i \rangle)$ are properly coloured for
$i$, $1 \leq i \leq \alpha$ and as the vertices $u_{i_1}$ and $u_{i_2}$
are coloured with the same colour $c_{i_1}$ in $G(\langle V_i
\rangle)$, the vertices $u_{i_1}$ and $u_{i_2}$ must be non-adjacent in
$G(\langle V_i \rangle)$. Since the $deg(u_{i_1}) = 1$ in $G(\langle
V_i \rangle)$, the vertex $u_{i_1}$ should have an adjacent vertex
$u_{i_1}^\prime$ in $G(\langle V_i \rangle)$ and it should have been
coloured with the colour $c_{i_2}$ in $G(\langle V_i \rangle)$ by the proper $\tau(G)$-vertex colouring. For each $h$, $1 \leq h \leq k$, $\tau(G(\langle V_h \rangle) \leq 2$, the
edge $u_{i_1} u_{i_1}^\prime$ must be an independent edge in $G(\langle
V_i \rangle)$. Exchange the colours of $u_{i_1}$ and
$u_{i_1}^\prime$. Thus, after this recolouring (this exchange), the
vertex $u_{i_1}$ is coloured with $c_{i_2}$. Therefore, after the
recolouring the vertices $u_{i_1}$ and $u_{i_2}$ received two different
colours $c_{i_2}$ and $c_{i_1}$ respectively in $G(\langle V_i \rangle)$. As a result, the vertices
$u_{i_1}$, $u_{i_2}$, $w_{j_1}$ and $w_{j_2}$ have received three
different colours in $G(\langle V_i \cup V_j \rangle)$.  Hence the
path $P$ of order four having the four vertices $u_{i_1}$, $u_{i_2}$,
$w_{j_1}$, $w_{j_2}$ are coloured with three different colours in $G$
after the recolouring.

\noi\textbf{Case 3} Suppose the path $P$ of order four in $G$ has three
vertices in $G(\langle V_i \rangle)$

\hspace{0.8cm} and the remaining one vertex in $G(\langle V_j \rangle)$, for $i,j$, $1 \leq i < j \leq \alpha$.

\noi Without loss of generality, we assume that $w_{j_1}$ is one of the
vertices of $P$ which is in $G(\langle V_j \rangle)$ and we assume
$u_{i_1}$, $u_{i_2}$ and $u_{i_3}$ are the other three vertices of $P$
which are in $G(\langle V_i \rangle)$. Then as seen in Case 1, two of
the vertices from the three vertices $u_{i_1}$, $u_{i_2}$ and $u_{i_3}$
should have received two different colours $c_{i_1}$,
$c_{i_2}$ by the proper $\tau(G)$-vertex colouring. Consequently, the vertices of the Path $P$ should have received three or four different colours in $G$.

Thus, every path $P$ of order four in $G$ is either coloured with at
least three different colours by the proper $\tau(G)$-vertex colouring of $G$ or else if they are bicoloured by the proper $\tau(G)$-vertex colouring, then the
vertices of such a path $P$ can be recoloured as done in the above recolouring process
so that the vertices of $P$ are coloured with at least three different
colours.\\
Thus there exist a $\tau(G)$-star colouring for $G$. Hence, $\chi_s(G) \leq \tau(G)$. Hence Claim 2.

If $G$ is a disconnected graph with $t \geq 2$ components $G_1,G_2,\dots,G_t$. Then by Claim 2, $\chi_s(G_i) \leq \tau(G_i)$, for $i$, $1 \leq i \leq t$. Let $\smash{\displaystyle\max_{1 \leq i \leq t}}\,\, \chi_s(G_i) = \chi_s(G_k)$ for some $k$, $1 \leq k \leq t$. Since $\chi_s(G) = \smash{\displaystyle\max_{1 \leq i \leq t}}\,\,\chi_s(G_i)$, we have $\chi_s(G) = \chi_s(G_k) \leq \tau(G_k) \leq \smash{\displaystyle\max_{1 \leq i \leq t}}\,\, \tau(G_i) = \tau(G)$. Thus, $\chi_s(G) \leq \tau(G)$. 
This completes the proof.
\end{proof}

An acyclic colouring of $G$ is a proper vertex colouring of $G$ such
that no cycle of $G$ is bicoloured. Acyclic chromatic number of a
graph $G$, denoted $a(G)$ is the minimum of colours which are
necessary to acyclically colour $G$.

\begin{corollary}
Let $G$ be any graph. Then the acyclic chromatic number of $G$, $a(G)
\leq \tau(G)$.
\end{corollary}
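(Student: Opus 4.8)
The plan is to derive this bound directly from Theorem~\ref{star} by showing that every star colouring of $G$ is automatically an acyclic colouring of $G$, so that $a(G) \le \chi_s(G) \le \tau(G)$.

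First I would place the two notions side by side: a star colouring is a proper vertex colouring in which no path on four vertices is bicoloured, while an acyclic colouring is a proper vertex colouring in which no cycle is bicoloured. I then claim that any star colouring $c$ of $G$ has no bicoloured cycle. Suppose, for contradiction, that $C$ is a cycle of $G$ whose vertices receive only two colours under $c$. If $|V(C)| = 3$, then $C$ is a triangle and, since $c$ is proper, its three pairwise adjacent vertices must receive three distinct colours, contradicting that $C$ is bicoloured. If $|V(C)| \ge 4$, then any four consecutive vertices of $C$ form, within $C$, a path on four vertices, and this $P_4$ uses only the two colours appearing on $C$; hence it is bicoloured, contradicting the assumption that $c$ is a star colouring. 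Therefore $c$ has no bicoloured cycle, i.e.\ $c$ is an acyclic colouring of $G$.

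Consequently every star colouring of $G$ is an acyclic colouring of $G$, whence $a(G) \le \chi_s(G)$. Combining this with Theorem~\ref{star}, which gives $\chi_s(G) \le \tau(G)$, we obtain $a(G) \le \chi_s(G) \le \tau(G)$, as required. For a disconnected graph one may alternatively argue componentwise exactly as at the end of the proof of Theorem~\ref{star}, since both $a(\cdot)$ and $\tau(\cdot)$ are the maxima of the corresponding parameters over the components, but this is not even needed once the inequality $a(G)\le\chi_s(G)$ is in hand.

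I do not expect a genuine obstacle here; the only point that needs a moment's care is the triangle case, where ``bicoloured'' must be ruled out using properness of $c$ rather than the $P_4$ condition, since a triangle contains no $P_4$ as a subpath.
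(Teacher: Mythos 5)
Your proof is correct and follows essentially the same route as the paper: both deduce the bound from $a(G)\le\chi_s(G)$ combined with Theorem~\ref{star}. The only difference is that you also supply the short argument for the folklore inequality $a(G)\le\chi_s(G)$ (including the correct observation that the triangle case must be handled via properness rather than the $P_4$ condition), which the paper simply asserts without proof.
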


\begin{proof}
For every graph $G$, $a(G) \leq \chi_s(G)$. By Theorem \ref{star}, we have
$\chi_s(G) \leq \tau(G)$ for any graph $G$. Thus, $a(G) \leq \tau(G)$,
for any graph $G$.
\end{proof}

\section{Discussion}
Path Partition Theorem is a beautiful and natural theorem and it significantly helped to get the upper bounds for chromatic number, star chromatic number and detour chromatic number. We believe that Path Partition Theorem can be significantly used for obtaining upper bounds of other different coloring related parameters too. In a general approach, understanding the following question will be interesting and significant too.
\begin{quotation}
What are the other graph parameters for which such partitions(like $\tau$-partition) can be obtained?
\end{quotation}  

\section*{Acknowledgement}

The author wishes to thank Professor Bill Jackson and Dr. Carol
Whitehead, University of London, for introducing the Path Partition
Conjecture to the author and for their continuous encouragement.

\section*{References}

\end{document}